\newtheorem{theorem}{Theorem}[section]
\newtheorem{corollary}[theorem]{Corollary}
\newtheorem{lemma}[theorem]{Lemma}
\theoremstyle{definition}
\newtheorem{definition}[theorem]{Definition}
\newtheorem{remark}[theorem]{Remark}
\begin{document}

\title{Lyapunov Functions and Stability Analysis\\ of Fractional-Order Systems\thanks{This is a 
preprint whose final form is published by Elsevier in the book 
'Mathematical Analysis of Infectious Diseases', 1st Edition -- June 1, 2022, 
ISBN: 9780323905046.}}

\author{Adnane Boukhouima$^{a}$\\
\texttt{adnaneboukhouima@gmail.com}
\and Houssine Zine$^{b}$\\
\texttt{zinehoussine@ua.pt}
\and El Mehdi Lotfi$^{a}$\\
\texttt{lotfiimehdi@gmail.com}
\and Marouane Mahrouf$^{a}$\\
\texttt{marouane.mahrouf@gmail.com}
\and Delfim F. M. Torres$^{b,}$\thanks{Corresponding author.}\\
\texttt{delfim@ua.pt}
\and Noura Yousfi$^{a}$\\
\texttt{nourayousfi@hotmail.com}}

\date{$^{a}$Laboratory of Analysis, Modeling and Simulation (LAMS),\\
Faculty of Sciences Ben M'sik, Hassan II University of Casablanca,\\
P.O. Box 7955 Sidi Othman, Casablanca, Morocco\\[0.3cm]
$^{b}$\mbox{Center for Research and Development in Mathematics and Applications (CIDMA),}\\
Department of Mathematics, University of Aveiro, 3810-193 Aveiro, Portugal}

\maketitle


\begin{abstract}
This study presents new estimates for fractional derivatives 
without singular kernels defined by some specific functions. 
Based on obtained inequalities, we give a useful method 
to establish the global stability of steady states for 
fractional-order systems and generalize some works 
existing in the literature. Finally, we apply our 
results to prove the global stability of a 
fractional-order SEIR model with a general incidence rate.\\

\bigskip

\noindent {\bf Keywords:} nonlinear dynamics, fractional calculus, 
fractional derivatives, Lyapunov functions, stability analysis.
\end{abstract}


\section{Introduction}

In the last few years, the application of fractional differential equations (FDEs) 
has increased and gained much attention from researchers due to their ability 
in modeling and describing anomalous dynamics of real-world processes with memory 
and hereditary properties. Due to these properties, FDEs have been widely and 
successfully applied in various fields of science and engineering, 
such as viscoelasticity, signal and image processing, physics, mechanics, 
control, biology, and economy and finance \cite{Debnath,Sun}.

Fractional calculus (FC) literature assists to remarkable development 
of the fractional notions of differentiation. Several types of fractional derivatives 
were proposed, such as the Riemann--Liouville (RL), Caputo (C), Caputo--Fabrizio (CF) 
and Atangana--Baleanu--Caputo (ABC) operators. The standard RL
and C derivatives \cite{Kilbas} have certain disadvantages, 
being classified as fractional derivatives with singular kernels. 
Caputo and Fabrizio \cite{Caputo} suggested a new fractional derivative 
in which the memory is represented by an exponential kernel. Few years later, 
another fractional derivative was proposed by Atangana and Baleanu \cite{Atangana}, 
where the memory kernel is modeled by the Mittag--Leffler function.  
These operators are extensively used by different researchers 
to describe the dynamics of various nonlinear systems 
\cite{Bas,Boukhouima1,Hristov,Mouaouine,Sheikh,Ullah}. 

Stability is one of the powerful tools for analyzing 
the qualitative properties of non-linear dynamical systems. 
Lyapunov's direct method, also called the second Lyapunov's method, 
represents an effective way to examine the global behavior of a system 
without resolving it explicitly. This technique is based on constructing 
appropriate functionals, called the Lyapunov functionals, that should
satisfy some conditions. In physics, these functionals can be either energy, 
potential, or other, but generally there is no precise technique to determine them. 
Recently, many scholars have focused on the stability analysis of fractional-order 
systems and some others have proposed specific Lyapunov functionals candidates, 
such as Volterra-type and quadratic functions 
\cite{Aguila-Camacho,Delavari,Li,Sene,Taneco-Hernandez,De-Leon}. 
Nevertheless, these functions remain inadequate and incompatible 
with certain classes of fractional-order systems.

Motivated by the aforementioned works and observations, our main contribution 
here is to propose general Lyapunov functionals as candidates for fractional-order 
systems. We first develop new inequalities to estimate the fractional-order 
derivative of specific functions that generalize some works existing in the literature. 
These estimates allow us to construct suitable Lyapunov's functionals for 
fractional-order systems and, therefore, to establish the global stability 
of their steady-states.

The rest of the paper is structured as follows. In Section~\ref{sec:02}, 
some necessary definitions and properties related to the fractional 
calculus are recalled. Useful estimations for fractional derivatives 
are proved in Section~\ref{sec:03}. As an application of our results,
the global stability of a SEIR fractional-order model with a general 
incidence rate is studied in Section~\ref{sec:04}. Finally, 
we end with Section~\ref{sec:05} of conclusions.


\section{Preliminaries}
\label{sec:02}

In this section, we recall some definitions and properties 
of fractional operators that will be useful throughout our work. For more details, 
see \cite{Atanackovic,Atangana,Baleanu,Caputo,Kilbas,Losada,Prabhakar}.

\begin{definition}
Let $ f\in L^{1}(t_{0},+\infty) $ and $ 0<\alpha\leq 1 $. 
The Riemann--Liouville (RL) fractional integral 
of function $f$ is defined by
\begin{equation}
\label{RLI}
^{RL}I_{t_{0}}^{\alpha}f(t)
= \dfrac{1}{\Gamma(\alpha)}\int^{t}_{t_{0}}(t-x)^{\alpha-1}f(x)dx,
\end{equation}
where $ \Gamma(\cdot)$ is the Gamma function.
\end{definition}

\begin{definition}
Let $f\in H^{1}(t_{0},+\infty)$ and $0<\alpha\leq 1 $. 
The Caputo (C) fractional derivative of function $f$ is given by
\begin{equation}
\label{CD}
^{C}D_{t}^{\alpha}f(t)= \dfrac{1}{\Gamma(1-\alpha)}
\int^{t}_{t_{0}}\dfrac{f^{'}(x)}{(t-x)^{\alpha}}dx.
\end{equation}
\end{definition}

\begin{definition}
Let $f\in H^{1}(t_{0},+\infty)$ and $0<\alpha\leq 1$. 
The Caputo--Fabrizio (CF) fractional derivative 
of function $f$ is given by
\begin{equation}
\label{CFD}
^{CF}D_{t}^{\alpha}f(t)
= \dfrac{1}{2}\dfrac{B(\alpha)(2-\alpha)}{1-\alpha}
\int^{t}_{t_{0}}f^{'}(x)\exp\left[ -\dfrac{\alpha}{1-\alpha}(t-x)\right] dx,
\end{equation}
where $B(\alpha)$ denotes a normalization function obeying $B(0)=B(1)=1$.
The fractional integral associated with the CF fractional derivative is defined by
\begin{equation}
\label{CFI}
_{t}^{CF}I_{t_{0}}^{\alpha}f(t)
= \dfrac{2(1-\alpha)}{B(\alpha)(2-\alpha)}f(t)
+\dfrac{2\alpha}{B(\alpha)(2-\alpha)}\,_{t}^{RL}I_{t_{0}}^{1}f(t).
\end{equation}
\end{definition}

\begin{definition}
Let $f\in H^{1}(t_{0},+\infty)$ and $0<\alpha\leq 1$. 
The Atangana--Baleanu--Caputo (ABC) fractional derivative 
of function $f$ is given by
\begin{equation}
\label{ABCD}
_{t_{0}}^{ABC}D_{t}^{\alpha}f(t)
= \dfrac{B(\alpha)}{1-\alpha}\int^{t}_{t_{0}}f^{'}(x)E_\alpha\left[ 
-\dfrac{\alpha}{1-\alpha}(t-x)^{\alpha}\right]dx.
\end{equation} 
The fractional integral associated with 
the ABC fractional derivative is defined by
\begin{equation}
\label{ABI}
_{t}^{AB}I_{t_{0}}^{\alpha}f(t)
= \dfrac{1-\alpha}{B(\alpha)}f(t)
+\dfrac{\alpha}{B(\alpha)}\,_{t}^{RL}I_{t_{0}}^{\alpha}f(t).
\end{equation}
\end{definition}

\begin{definition}
Let $\alpha > 0$ and $\beta>0$. The Mittag-Leffler 
function of two parameters $\alpha$ and $\beta$ 
is defined by 
$$
E_{\alpha,\beta}(z)=\sum_{j=0}^{\infty}
\dfrac{z^{j}}{\Gamma(\alpha j+\beta)}, \; z \in \mathbb{C}. 
$$
\end{definition}

\begin{remark}
If $\beta=1$, then we have
$$
E_{\alpha,1}(z) = E_{\alpha}(z)=\sum_{j=0}^{\infty}\dfrac{z^{j}}{\Gamma(\alpha j+1)},
$$
which is called the Mittag-Leffler function of one parameter $\alpha$;
if $\alpha=\beta =1$, then one gets
$$
E_{1,1}(z)=\sum_{j=0}^{\infty}\dfrac{z^{j}}{j!}=\exp(z).
$$
\end{remark}

\begin{theorem}
The derivative of the Mittag-Leffler function satisfies:
$$ 
\dfrac{dE_{\alpha,\beta}}{dz}(z)= E^{2}_{\alpha,\alpha+\beta}(z).
$$
\end{theorem}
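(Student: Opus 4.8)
The plan is to work directly from the power series defining $E_{\alpha,\beta}$ and to recognize the result of term-by-term differentiation as a three-parameter (Prabhakar) Mittag--Leffler function. Recall that the superscript in $E^{\gamma}_{\alpha,\beta}$ denotes the Prabhakar generalization
$$
E^{\gamma}_{\alpha,\beta}(z)=\sum_{j=0}^{\infty}\frac{(\gamma)_{j}}{\Gamma(\alpha j+\beta)}\,\frac{z^{j}}{j!},
$$
where $(\gamma)_{j}=\gamma(\gamma+1)\cdots(\gamma+j-1)$ is the Pochhammer symbol (with $(\gamma)_{0}=1$); for $\gamma=1$ this collapses to $E_{\alpha,\beta}$ since $(1)_{j}=j!$. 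Making this convention explicit at the outset is important, since the $2$ in the statement is the third Prabhakar parameter and \emph{not} a square.

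First I would observe that, because $\Gamma(\alpha j+\beta)$ grows super-exponentially in $j$ (recall $\alpha,\beta>0$), the defining series for $E_{\alpha,\beta}$ has infinite radius of convergence. Hence it defines an entire function and converges uniformly on every compact subset of $\mathbb{C}$, which legitimizes differentiating term by term. Carrying this out gives
$$
\frac{dE_{\alpha,\beta}}{dz}(z)=\sum_{j=1}^{\infty}\frac{j\,z^{j-1}}{\Gamma(\alpha j+\beta)}.
$$

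Next I would reindex by setting $k=j-1$, which shifts the summation to
$$
\frac{dE_{\alpha,\beta}}{dz}(z)=\sum_{k=0}^{\infty}\frac{(k+1)\,z^{k}}{\Gamma(\alpha k+\alpha+\beta)}.
$$
The final step is to identify the coefficient $k+1$ with a ratio of Pochhammer symbols: since $(2)_{k}=2\cdot 3\cdots(k+1)=(k+1)!$, we have $(2)_{k}/k!=k+1$, and therefore the series above is precisely $E^{2}_{\alpha,\alpha+\beta}(z)$, which establishes the identity.

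I do not anticipate a substantive obstacle here, as the argument is a reindexing of an entire power series. The two points that genuinely require care are the justification of term-by-term differentiation (which follows from local uniform convergence of the entire series) and the bookkeeping $(2)_{k}/k!=k+1$, which is the actual crux of matching the differentiated series to the Prabhakar function rather than to a square of a Mittag--Leffler function.
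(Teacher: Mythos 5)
Your proof is correct. The paper itself states this theorem without proof (it is recalled from the literature on the Prabhakar function, cf.\ the reference to Prabhakar's 1971 paper in the preliminaries), so there is no argument in the text to compare against; your term-by-term differentiation, the reindexing $k=j-1$, and the identification $(2)_k/k!=k+1$ constitute the standard verification of the identity $\frac{d}{dz}E_{\alpha,\beta}(z)=E^{2}_{\alpha,\alpha+\beta}(z)$ (the case $\gamma=1$ of $\frac{d}{dz}E^{\gamma}_{\alpha,\beta}(z)=\gamma E^{\gamma+1}_{\alpha,\alpha+\beta}(z)$). Your opening clarification that the superscript $2$ is the third Prabhakar parameter and not a square is not only correct but genuinely valuable here, since the paper defines only the two-parameter Mittag--Leffler function and then uses $E^{2}_{\alpha,\alpha+1}$ in the proof of its Theorem~3.1 without ever defining the three-parameter function; a reader relying solely on the paper's stated definitions could easily misread the statement.
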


In \cite{Aguila-Camacho,Sene,Taneco-Hernandez}, the authors prove 
the following inequalities for  estimating the fractional derivative 
of certain functions.

\begin{lemma}
\label{Lemma3.1}
Let $u(t)$ be a real continuous and differentiable function. 
Then, for any $t \geq t_{0}$ and $0<\alpha\leq1$, we have
\begin{equation}
\label{18}
_{t_{0}}^{ABC}D_{t}^{\alpha}\left( u^{2}(t)\right) 
\leq 2u(t)\, ^{ABC}_{t_{0}}D_{t}^{\alpha}u(t),
\end{equation}
\begin{equation}
\label{19}
_{t_{0}}^{CF}D_{t}^{\alpha}\left(u^{2}(t)\right) 
\leq 2u(t)\, ^{CF}_{t_{0}}D_{t}^{\alpha}u(t),
\end{equation}
\begin{equation}
\label{20}
_{t_{0}}^{C}D_{t}^{\alpha}\left(u^{2}(t)\right) 
\leq 2u(t)\, ^{C}_{t_{0}}D_{t}^{\alpha}u(t).
\end{equation}
\end{lemma}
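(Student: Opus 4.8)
The plan is to prove all three inequalities by the same strategy, since they share a common structure: each fractional derivative is an integral of $u'(x)$ against a nonnegative kernel. Let me sketch how I would attack the representative case \eqref{18} for the ABC derivative, and then indicate how the same argument transfers to \eqref{19} and \eqref{20}.

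Writing out both sides of \eqref{18}, the difference $2u(t)\,^{ABC}_{t_0}D_t^\alpha u(t) - {}^{ABC}_{t_0}D_t^\alpha(u^2(t))$ equals $\frac{B(\alpha)}{1-\alpha}\int_{t_0}^{t}\bigl[2u(t)u'(x) - 2u(x)u'(x)\bigr]E_\alpha\!\left[-\tfrac{\alpha}{1-\alpha}(t-x)^\alpha\right]dx$, where I have used $(u^2)' = 2uu'$. So the claim reduces to showing that
\begin{equation}
\label{eq:reduction}
\int_{t_0}^{t}\bigl[u(t)-u(x)\bigr]u'(x)\,
E_\alpha\!\left[-\tfrac{\alpha}{1-\alpha}(t-x)^\alpha\right]dx \geq 0.
\end{equation}
First I would observe that the Mittag--Leffler kernel is nonnegative on the relevant range, so the sign of the integrand is governed by $[u(t)-u(x)]u'(x)$. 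The natural move is to introduce the auxiliary function $w(x) := u(t) - u(x)$ (with $t$ fixed), noting $w'(x) = -u'(x)$ and $w(t)=0$, so that $[u(t)-u(x)]u'(x) = -w(x)w'(x) = -\tfrac{1}{2}\frac{d}{dx}w^2(x)$. Substituting this into \eqref{eq:reduction} converts the problem into showing that $-\tfrac12\int_{t_0}^t \frac{d}{dx}w^2(x)\, K(t-x)\,dx \geq 0$, where $K$ is the (nonnegative, and on $(0,\infty)$ decreasing-in-$(t-x)$) kernel.

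The key step is then integration by parts. Integrating $-\tfrac12 \frac{d}{dx}w^2(x)$ against $K(t-x)$ produces a boundary term plus an integral of $w^2(x)$ against $\frac{d}{dx}K(t-x)$. The boundary term at $x=t$ vanishes because $w(t)=0$, and at $x=t_0$ it contributes $+\tfrac12 w^2(t_0)K(t-t_0)\geq 0$. The remaining integral carries the derivative of the kernel; the crucial monotonicity fact is that $x\mapsto K(t-x)=E_\alpha[-\tfrac{\alpha}{1-\alpha}(t-x)^\alpha]$ is nondecreasing in $x$ on $(t_0,t)$ (equivalently, $E_\alpha$ of a negative argument is decreasing in the magnitude of that argument), which makes $\frac{d}{dx}K(t-x)\geq 0$, so that $\tfrac12\int_{t_0}^t w^2(x)\frac{d}{dx}K(t-x)\,dx \geq 0$ as well. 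Both surviving terms being nonnegative yields \eqref{eq:reduction}, hence \eqref{18}.

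I expect the main obstacle to be establishing the required sign and monotonicity properties of the Mittag--Leffler kernel rigorously, since these are the only nonelementary inputs: one needs that $E_\alpha(-s)\geq 0$ for $s\geq 0$ and $0<\alpha\leq 1$, and that it is nonincreasing in $s$ (complete monotonicity of $E_\alpha$ on the negative axis is the clean statement, but only nonnegativity and monotonicity are actually needed here). For the CF inequality \eqref{19} the kernel is simply $\exp[-\tfrac{\alpha}{1-\alpha}(t-x)]$, which is manifestly positive and manifestly increasing in $x$, so the same integration-by-parts argument goes through with strictly elementary kernel estimates. For the Caputo inequality \eqref{20} the kernel is $(t-x)^{-\alpha}$, which is singular at $x=t$; here I would be slightly more careful with the boundary term and the integrability near $x=t$, but since $(t-x)^{-\alpha}$ is again positive and increasing in $x$, and $w(x)=u(t)-u(x)\to 0$ as $x\to t$ controls the singularity, the argument closes in the same way. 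An alternative, and perhaps cleaner, route for all three is to verify the integrand inequality $[u(t)-u(x)]u'(x)\geq -\tfrac12\frac{d}{dx}[u(t)-u(x)]^2$ pointwise and then integrate against the nonnegative kernel directly, but the integration-by-parts version is what makes the role of kernel monotonicity transparent.
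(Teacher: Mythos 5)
Your argument is correct and is essentially the method of the paper: the paper itself does not prove Lemma~\ref{Lemma3.1} (it quotes it from \cite{Aguila-Camacho,Sene,Taneco-Hernandez}), but its proof of the generalization in Theorem~\ref{proposition1} uses exactly your device --- write the relevant difference as an integral of the $x$-derivative of a nonnegative quantity (your $\tfrac12 w^2(x)$ with $w(x)=u(t)-u(x)$, the paper's $w(x,t)$) against the kernel, integrate by parts, kill the boundary term at $x=t$, and invoke nonnegativity and monotonicity of the Mittag--Leffler/exponential kernel as the only nonelementary inputs. One caveat: your closing ``alternative route'' does not work as stated, since $[u(t)-u(x)]u'(x)=-\tfrac12\frac{d}{dx}[u(t)-u(x)]^2$ is an identity, and integrating it against the nonnegative kernel without the integration by parts merely reproduces the same integral instead of yielding a sign.
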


\begin{lemma}
\label{Lemma3.2}
Let $u(t)$ be a positive real continuous and differentiable function. 
Then, for any $t \geq t_{0}$, $0<\alpha\leq1$, and $u^{*}>0$,
one has
\begin{align}
\label{10}
_{t_{0}}^{ABC}D_{t}^{\alpha}\left[ u(t)-u^*
-u^*\ln\dfrac{u(t)}{u^*}\right]& 
\leq \left( 1-\dfrac{u^*}{u(t)}\right)
\: _{t_{0}}^{ABC}D_{t}^{\alpha}u(t),\\
\label{11}
_{t_{0}}^{C}D_{t}^{\alpha}\left[ u(t)-u^*
-u^*\ln\dfrac{u(t)}{u^*}\right]& 
\leq \left( 1-\dfrac{u^*}{u(t)}\right)
\: _{t_{0}}^{C}D_{t}^{\alpha}u(t).
\end{align}
\end{lemma}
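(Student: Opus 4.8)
The plan is to treat both inequalities \eqref{10} and \eqref{11} simultaneously, since the only difference between the ABC and C operators is their kernel. The starting observation is that the bracketed function is $g(u) = u - u^{*} - u^{*}\ln(u/u^{*})$, whose derivative is $g'(u) = 1 - u^{*}/u$; thus the right-hand side of each inequality is exactly $g'(u(t))\,D_{t}^{\alpha}u(t)$, and the claim takes the form of a fractional chain-rule inequality $D_{t}^{\alpha} g(u(t)) \le g'(u(t))\,D_{t}^{\alpha} u(t)$. The single structural fact I would exploit is that $g$ is \emph{convex} on $(0,+\infty)$, since $g''(u) = u^{*}/u^{2} > 0$.

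First I would introduce the Bregman-type auxiliary function $G(x) = g(u(x)) - g(u(t)) - g'(u(t))\bigl(u(x) - u(t)\bigr)$, where $t$ is fixed and $x\in[t_{0},t]$ is the integration variable. Convexity of $g$ gives $G(x)\ge 0$ for every $x$, while clearly $G(t)=0$ and $G'(x) = \bigl[g'(u(x)) - g'(u(t))\bigr]u'(x)$. Writing each fractional derivative through its defining integral with kernel $K(t-x)$ --- that is, $K(s)=s^{-\alpha}/\Gamma(1-\alpha)$ in the Caputo case and $K(s)=\frac{B(\alpha)}{1-\alpha}E_{\alpha}\!\bigl[-\frac{\alpha}{1-\alpha}s^{\alpha}\bigr]$ in the ABC case --- the difference of the two sides collapses, after inserting $g'(u(t))-g'(u(x)) = -G'(x)/u'(x)$, to
\[
g'(u(t))\,D_{t}^{\alpha}u(t) - D_{t}^{\alpha}g(u(t)) = -\int_{t_{0}}^{t} K(t-x)\,G'(x)\,dx,
\]
so it suffices to show that this last integral is nonpositive.

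The central step is an integration by parts in $\int_{t_{0}}^{t} K(t-x)\,G'(x)\,dx$. The boundary contribution at $x=t$ vanishes because $G(x)=O\bigl((t-x)^{2}\bigr)$, which follows from differentiability of $u$ and smoothness of $g$ and which kills the kernel singularity in the Caputo case since $2-\alpha>0$; at $x=t_{0}$ the boundary term produces $-K(t-t_{0})G(t_{0})\le 0$. The remaining piece is $-\int_{t_{0}}^{t}\frac{d}{dx}\bigl[K(t-x)\bigr]G(x)\,dx$, and this is $\le 0$ provided $K(t-x)$ is nondecreasing in $x$, combined with $G\ge 0$. For Caputo this monotonicity is immediate, as $\frac{d}{dx}\bigl[K(t-x)\bigr]=\alpha(t-x)^{-\alpha-1}/\Gamma(1-\alpha)\ge 0$.

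The hard part will be the ABC case, where monotonicity of the kernel reduces to the fact that $s\mapsto E_{\alpha}(-s)$ is nonincreasing for $0<\alpha\le 1$. I would establish this from the complete monotonicity of the one-parameter Mittag-Leffler function on the negative real axis --- equivalently, that its derivative, expressible through the two-parameter function via the preceding theorem, is nonnegative at nonpositive arguments --- which after the chain rule gives $\frac{d}{dx}K(t-x)\ge 0$ because $(t-x)^{\alpha-1}>0$. Assembling the two nonpositive contributions then yields $\int_{t_{0}}^{t}K(t-x)\,G'(x)\,dx\le 0$, hence $D_{t}^{\alpha}g(u(t))\le\bigl(1-u^{*}/u(t)\bigr)D_{t}^{\alpha}u(t)$ for both operators, which is precisely \eqref{10} and \eqref{11}.
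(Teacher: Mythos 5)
Your proof is correct, and its engine is the same as the paper's: the paper never proves Lemma~\ref{Lemma3.2} directly (it imports it from the literature), but its Theorem~\ref{proposition1} --- whose corollary with $g(s)=s$ yields the ABC inequality \eqref{10} --- is proved by exactly your scheme: integrate the kernel by parts against the derivative of a nonnegative divergence term, then use the sign of the boundary terms and of the kernel's derivative. Your Bregman function is even proportional to the paper's auxiliary function: $G(x)=\frac{u^*}{u(t)}\,w(x,t)$ with $w(x,t)=u(x)-u(t)-u(t)\ln\bigl(u(x)/u(t)\bigr)$. Still, your packaging buys three real things. First, by abstracting the kernel you prove the Caputo inequality \eqref{11} directly, which the paper only obtains by citation (Remark~\ref{Remark3}, \cite{Boukhouima}); this is where your boundary estimate $G(x)=O\bigl((t-x)^{2}\bigr)$ earns its keep, killing the $(t-x)^{-\alpha}$ singularity --- a step with no counterpart in the paper's argument, since the ABC/CF kernels are bounded at $x=t$ and there $w(t,t)=0$ suffices. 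Second, deriving $G\ge 0$ from convexity explains structurally what the paper verifies ad hoc, and shows the same argument works for any convex $C^{1}$ outer function (so it subsumes Lemma~\ref{Lemma3.1} as well). Third, you make explicit the fact both proofs rest on: monotone decay of $E_{\alpha}(-\cdot)$ on $[0,\infty)$, i.e.\ (a consequence of) complete monotonicity, which the paper invokes silently through the unproved nonnegativity of $E^{2}_{\alpha,\alpha+1}$ at negative arguments. What the paper's route buys instead is generality in the other direction: Theorem~\ref{proposition1} covers the whole class $\Psi$ built from an arbitrary positive increasing $g$, not only the logarithmic case. Two small repairs to your write-up: state the identity as $\bigl[g'(u(t))-g'(u(x))\bigr]u'(x)=-G'(x)$ rather than $g'(u(t))-g'(u(x))=-G'(x)/u'(x)$, which divides by a possibly vanishing $u'(x)$; and in the Caputo case perform the integration by parts on $[t_{0},t-\varepsilon]$ and let $\varepsilon\to 0$, which your quadratic bound on $G$ justifies.
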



\section{Useful fractional derivative estimates}
\label{sec:03}

The aim of this section is to establish some new estimates 
for the fractional derivative of function $\Psi$ defined by
\begin{equation}
\label{12}
\begin{split}
\Psi(u)&=\int^{u}_{u^{*}}\dfrac{g(s)-g(u^{*})}{g(s)}ds\\
&=u-u^{*}-\int^{u}_{u^{*}}
\dfrac{g(u^{*})}{g(s)}ds,
\end{split}
\end{equation}
where $g$ is a non-negative, differentiable, and strictly 
increasing function on $\mathbb{R}^{+}$. Our estimates will 
allow us to extend the classical Lyapunov functions 
to fractional-order systems.

Note that $\Psi$ is positive in $\mathbb{R}^{+}
\setminus \lbrace u^{*}\rbrace$ with $\Psi(u^{*})=0$. 
In fact, $\Psi$ is differentiable and 
$$ 
\dfrac{d\Psi }{du}=1-\dfrac{g(u^{*})}{g(u)}. 
$$
Since $g$ is a strictly increasing function, then $\Psi$ 
is strictly decreasing if $u< u^{*}$ and strictly increasing 
if $u> u^{*}$, with $u^{*}$ its global minimum. 

\begin{theorem}
\label{proposition1}
Let $u(t)$ be a real positive differentiable function. 
Then, for any $t \geq t_{0}$, $0<\alpha\leq1$, and $u^{*}>0$, we have
\begin{equation}
\label{13}
_{t_{0}}^{ABC}D_{t}^{\alpha}\Psi(u(t))
\leq \left( 1-\dfrac{g(u^{*})}{g(u(t))}\right)
\, ^{ABC}_{t_{0}}D_{t}^{\alpha}u(t),
\end{equation}
\begin{equation}
\label{17}
_{t_{0}}^{CF}D_{t}^{\alpha}\Psi(u(t))
\leq \left( 1-\dfrac{g(u^{*})}{g(u(t))}\right)
\, ^{CF}_{t_{0}}D_{t}^{\alpha}u(t).
\end{equation}
\end{theorem}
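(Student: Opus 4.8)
The plan is to prove both inequalities at once, since the ABC and CF derivatives differ only through their kernels
$$K(s) = E_\alpha\!\left[-\tfrac{\alpha}{1-\alpha}s^\alpha\right] \ \text{(ABC)}, \qquad K(s) = \exp\!\left[-\tfrac{\alpha}{1-\alpha}s\right] \ \text{(CF)},$$
both strictly positive and non-increasing on $(0,+\infty)$. Write $D^\alpha$ for either operator and $C>0$ for its (positive) normalising constant. First I would record two elementary facts about $\Psi$. Differentiating \eqref{12} gives $\Psi'(u) = 1 - g(u^*)/g(u)$ and $\Psi''(u) = g(u^*)g'(u)/g(u)^2 \geq 0$, so $\Psi$ is convex on $\mathbb{R}^{+}$; and by the chain rule $\frac{d}{dx}\Psi(u(x)) = \Psi'(u(x))\,u'(x)$. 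With these, the target inequality is equivalent to $Q(t) \geq 0$, where
$$Q(t) := \left(1 - \frac{g(u^*)}{g(u(t))}\right) D^\alpha u(t) - D^\alpha \Psi(u(t)) = C\int_{t_0}^t \big[\Psi'(u(t)) - \Psi'(u(x))\big]\,u'(x)\,K(t-x)\,dx.$$

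The key step is to integrate by parts after recognising the integrand as an exact derivative. Since $u(t)$ is constant in $x$, I set $G(x) := \Psi'(u(t))\,u(x) - \Psi(u(x))$, so that $G'(x) = [\Psi'(u(t)) - \Psi'(u(x))]\,u'(x)$ and $Q(t) = C\int_{t_0}^t G'(x)\,K(t-x)\,dx$. Integrating by parts (using $\frac{d}{dx}K(t-x) = -K'(t-x)$) and then replacing $G(x)$ by $\widetilde{G}(x) + G(t)$ inside the resulting integral, where $\widetilde{G}(x) := G(x) - G(t)$, the two $G(t)K(0)$ contributions cancel; using $\int_{t_0}^t K'(t-x)\,dx = K(t-t_0) - K(0)$ together with $G(t) - G(t_0) = -\widetilde{G}(t_0)$, the expression collapses to
$$Q(t) = C\left[-\widetilde{G}(t_0)\,K(t-t_0) + \int_{t_0}^t \widetilde{G}(x)\,K'(t-x)\,dx\right].$$

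It then remains to check signs. Convexity of $\Psi$ yields the tangent-line bound $\Psi(u(x)) - \Psi(u(t)) \geq \Psi'(u(t))\,[u(x)-u(t)]$, that is $\widetilde{G}(x) \leq 0$ for every $x$; in particular $-\widetilde{G}(t_0) \geq 0$. Combined with $K > 0$ and $K' \leq 0$, both terms in the bracket are non-negative, whence $Q(t) \geq 0$ and the two claimed inequalities follow. The main obstacle is justifying the sign and monotonicity of the kernel: for CF this is immediate from the exponential, but for ABC one must invoke the positivity and non-increasingness of $s \mapsto E_\alpha(-\lambda s^\alpha)$ for $0<\alpha\le 1$ and $\lambda = \tfrac{\alpha}{1-\alpha}>0$, using the derivative formula for the Mittag--Leffler function recalled above to confirm $K'(s)\le 0$. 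A secondary technical point is ensuring that $g(u(x))$ stays bounded away from $0$ along the trajectory, so that $\Psi'$ and all the integrals involved are well defined.
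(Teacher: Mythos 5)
Your proposal is correct and follows essentially the same route as the paper: after reduction, your quantity $-\widetilde{G}(x)=\Psi(u(x))-\Psi(u(t))-\Psi'(u(t))\,[u(x)-u(t)]$ is exactly a positive multiple (namely $g(u^{*})/g(u(t))$ times) of the paper's auxiliary function $w(x,t)$, and both arguments conclude by the same integration by parts, nonnegativity of this Bregman-type term, and positivity/monotonicity of the kernel. The only (welcome) refinements are that you treat the ABC and CF kernels uniformly and state explicitly the complete-monotonicity fact $K'\le 0$ that the paper uses implicitly through the sign of $E^{2}_{\alpha,\alpha+1}$ at negative arguments.
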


\begin{proof}
We start by reformulating inequality \eqref{13}. 
By the linearity of the ABC fractional derivative, 
we obtain that
\begin{equation*}
_{t_{0}}^{ABC}D_{t}^{\alpha}\Psi(u(t))
=\:_{t_{0}}^{ABC}D_{t}^{\alpha}u(t)-\:_{t_{0}}^{ABC}D_{t}^{\alpha}\left[ 
\int_{u^*}^{u(t)}\dfrac{g(u^*)}{g(s)}ds\right].
\end{equation*}
Hence, the inequality \eqref{13} becomes
\begin{equation*}
_{t_{0}}^{ABC}D_{t}^{\alpha}u(t)-\:_{t_{0}}^{ABC}D_{t}^{\alpha}\left[ 
\int_{u^*}^{u(t)}\dfrac{g(u^*)}{g(s)}ds\right]
\leq \left( 1-\dfrac{g(u^*)}{g(u)}\right)
\: _{t_{0}}^{ABC}D_{t}^{\alpha}u(t).
\end{equation*}
Because $g$ is a non-negative function, we get
\begin{equation*}
g(u(t))\left[ _{t_{0}}^{ABC}D_{t}^{\alpha}u(t)
- \,_{t_{0}}^{ABC}D_{t}^{\alpha}\left( 
\int_{u^*}^{u(t)}\dfrac{g(u^*)}{g(s)}ds\right)\right] 
\leq \left( g(u(t))-g(u^*)\right)\, _{t_{0}}^{ABC}D_{t}^{\alpha}u(t).
\end{equation*}
Thus,  
\begin{equation}
\label{14}
_{t_{0}}^{ABC}D_{t}^{\alpha}u(t)-g(u(t))_{t_{0}}^{ABC}D_{t}^{\alpha}\left[ 
\int_{u^*}^{u(t)}\dfrac{1}{g(s)}ds\right]
\leq 0.
\end{equation}
Using the definition of ABC fractional derivative \eqref{ABCD}, we have
\begin{equation*}
_{t_{0}}^{ABC}D_{t}^{\alpha}u(t)
=\dfrac{B(\alpha)}{1-\alpha}
\int^{t}_{t_{0}}u'(x)E_\alpha\left[ 
-\dfrac{\alpha}{1-\alpha}(t-x)^{\alpha}\right]  dx
\end{equation*}
and
\begin{equation*}
_{t_{0}}^{ABC}D_{t}^{\alpha}\left[ 
\int_{u^*}^{u(t)}\dfrac{1}{g(s)}ds\right]
=\dfrac{B(\alpha)}{1-\alpha}\int^{t}_{t_{0}}
\dfrac{u'(x)}{g(u(x))}E_\alpha\left[ 
-\dfrac{\alpha}{1-\alpha}(t-x)^{\alpha}\right] dx.
\end{equation*}
Consequently, the inequality \eqref{14} can be written as
\begin{equation}
\label{15}
\dfrac{B(\alpha)}{1-\alpha}
\int^{t}_{t_{0}}u'(x)\left(1-\dfrac{g(u(t))}{g(u(x))}\right) 
E_\alpha\left[ -\dfrac{\alpha}{1-\alpha}(t-x)^{\alpha}\right] dx
\leq 0.
\end{equation}
Now, we show that the inequality \eqref{15}
is verified. For this, we denote
$$
H(t)= \int^{t}_{t_{0}}u'(x)\left(1-\dfrac{g(u(t))}{g(u(x))}\right) 
E_\alpha\left[ -\dfrac{\alpha}{1-\alpha}(t-x)^{\alpha}\right] dx
$$ 
and set
\begin{equation*}
v(x,t)= E_\alpha\left[ -\dfrac{\alpha}{1-\alpha}(t-x)^{\alpha}\right]; 
\quad \dfrac{dv(x,t)}{dx}=\dfrac{\alpha^2(t-x)^{\alpha-1}}{1-\alpha} 
E_{\alpha,\alpha+1}^{2} \left[ -\dfrac{\alpha}{1-\alpha}(t-x)^{\alpha}\right];
\end{equation*}
\begin{equation*}
w(x,t)=u(x)-u(t)-\int^{u(x)}_{u(t)}\dfrac{g(u(t))}{g(s)}ds; 
\quad\dfrac{dw(x,t)}{dx}=u'(x)\left( 1-\dfrac{g(u(t))}{g(u(x))}\right).
\end{equation*}
Integrating by parts the integral $H(t)$, we obtain that
\begin{equation}
\label{16}
\begin{split}
H(t) &= \left[ E_\alpha\left[ 
-\dfrac{\alpha}{1-\alpha}(t-x)^{\alpha}\right]w(x,t) \right]^{x=t}_{x=t_0}\\
&\quad -\int^{t}_{t_{0}}\dfrac{\alpha^2(t-x)^{\alpha-1}}{1-\alpha} 
E_{\alpha,\alpha+1}^{2} \left[ -\dfrac{\alpha}{1-\alpha}(t-x)^{\alpha}\right]w(x,t) dx.
\end{split}
\end{equation}
Since $w(x,t)\geq 0$ and
\begin{equation*}
\lim_{x\to t}E_\alpha\left[ 
-\dfrac{\alpha}{1-\alpha}(t-x)^{\alpha}\right]w(x,t)=0,
\end{equation*}
it follows that
\begin{equation*}
\begin{split}
H(t)
&=- E_\alpha\left[ -\dfrac{\alpha}{1-\alpha}(t-t_0)^{\alpha}\right]w(t_{0},t)\\
&\quad -\int^{t}_{t_{0}}\dfrac{\alpha^2(t-x)^{\alpha-1}}{1-\alpha} 
E_{\alpha,\alpha+1}^{2} \left[ 
-\dfrac{\alpha}{1-\alpha}(t-x)^{\alpha}\right]w(x,t) dx\leq 0.
\end{split}
\end{equation*}
As a result, the inequality \eqref{15} is satisfied and \eqref{13} holds true.
\end{proof}

\begin{remark}
Inequality \eqref{17} is obtained by replacing 
$E_\alpha\left[ -\dfrac{\alpha}{1-\alpha}(t-x)^{\alpha}\right]$ 
with $\exp\left[ -\dfrac{\alpha}{1-\alpha}(t-x)\right]$ 
and following the same steps as given in the proof
of Theorem~\ref{proposition1}.
\end{remark}

\begin{remark}
\label{Remark3}
A similar inequality also holds for the Caputo fractional 
derivative as follows \cite{Boukhouima}:
\begin{align}
\label{VTE1}
_{t_{0}}^{C}D_{t}^{\alpha}\Psi(u(t))&
\leq \left( 1-\dfrac{g(u^{*})}{g(u(t))}\right)
\, ^{C}_{t_{0}}D_{t}^{\alpha}u(t).
\end{align}
\end{remark}

If $g(s) =s$, then we obtain $ \Psi(u(t))= u(t)-u^*-u^*\ln\dfrac{u(t)}{u^*}$. 
We obtain from Theorem~\ref{proposition1} the following corollary.

\begin{corollary}
Let $u(t)$ be a positive differentiable function. 
For any $t \geq t_{0}$, $0<\alpha\leq1$, and $u^{*}>0$, we have
\begin{equation}
\label{VTE}
\begin{split}
_{t_{0}}^{ABC}D_{t}^{\alpha}\left[ u(t)-u^*
-u^*\ln\dfrac{u(t)}{u^*}\right]& 
\leq \left( 1-\dfrac{u^*}{u(t)}\right)
\: _{t_{0}}^{ABC}D_{t}^{\alpha}u(t),\\
_{t_{0}}^{CF}D_{t}^{\alpha}\left[ u(t)-u^*
-u^*\ln\dfrac{u(t)}{u^*}\right]& 
\leq \left( 1-\dfrac{u^*}{u(t)}\right)
\: _{t_{0}}^{CF}D_{t}^{\alpha}u(t).
\end{split}
\end{equation}
\end{corollary}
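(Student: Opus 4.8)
The plan is to obtain the corollary as an immediate specialization of Theorem~\ref{proposition1} to the choice $g(s)=s$, so that essentially all of the analytic work is already done. First I would verify that the identity map $g(s)=s$ satisfies the standing hypotheses placed on $g$ in \eqref{12}: on $\mathbb{R}^{+}$ it is non-negative, differentiable, and strictly increasing. Hence Theorem~\ref{proposition1}, together with Remark~\ref{Remark3} for the Caputo case, applies verbatim to this particular $g$, and both inequalities \eqref{13} and \eqref{17} become available for it.

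Next I would make the two $g$-dependent ingredients explicit. Evaluating the defining integral \eqref{12} with $g(s)=s$ gives
\begin{equation*}
\Psi(u)=\int_{u^{*}}^{u}\dfrac{s-u^{*}}{s}\,ds
=\int_{u^{*}}^{u}\left(1-\dfrac{u^{*}}{s}\right)ds
=u-u^{*}-u^{*}\ln\dfrac{u}{u^{*}},
\end{equation*}
which reproduces the Volterra-type function in the statement, while the coefficient on the right-hand side of \eqref{13} and \eqref{17} reduces to
\begin{equation*}
1-\dfrac{g(u^{*})}{g(u(t))}=1-\dfrac{u^{*}}{u(t)}.
\end{equation*}
Substituting both of these into the inequalities provided by Theorem~\ref{proposition1} then yields precisely the two claimed inequalities \eqref{VTE}.

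Since the corollary introduces no new estimate, there is no real obstacle; the only points deserving a line of justification are that the positivity assumption on $u(t)$ keeps $\ln\frac{u(t)}{u^{*}}$ and hence $\Psi(u(t))$ well defined, and that the elementary integral above is valid on all of $\mathbb{R}^{+}\setminus\{u^{*}\}$ irrespective of whether $u>u^{*}$ or $u<u^{*}$. Beyond these routine checks, the proof is purely a matter of inserting $g(s)=s$ into the already-established Theorem~\ref{proposition1}.
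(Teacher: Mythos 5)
Your proposal is correct and matches the paper exactly: the corollary is obtained by taking $g(s)=s$ in Theorem~\ref{proposition1}, computing $\Psi(u)=u-u^{*}-u^{*}\ln\frac{u}{u^{*}}$, and reading off the two inequalities. (The only superfluous remark is your appeal to Remark~\ref{Remark3}, since the corollary as stated covers only the ABC and CF cases, both already in Theorem~\ref{proposition1}.)
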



\section{An application}
\label{sec:04}

In \cite{Yang}, Yang and Xu proposed a SEIR model with Caputo 
fractional derivative and general incidence rate as follows:
\begin{equation}
\label{SEIR}
\begin{cases}
_{0}^{C}D_{t}^{\alpha}S(t) 
= \Lambda^{\alpha}  - d^{\alpha} S(t)- \beta^{\alpha} 
F\left(S(t)\right) G\left( I(t)\right),\\[0.2 cm]
_{0}^{C}D_{t}^{\alpha}E(t) = \beta^{\alpha} 
F\left(S(t)\right) G\left( I(t)\right) 
- (\sigma^{\alpha} + d^{\alpha})E(t), \\[0.2 cm]
_{0}^{C}D_{t}^{\alpha}I(t) = \sigma^{\alpha} E(t) 
- (\gamma^{\alpha} + d^{\alpha})I(t),\\[0.2 cm]
_{0}^{C}D_{t}^{\alpha}R(t) 
=  \gamma^{\alpha} I(t) - d^{\alpha} R(t),
\end{cases}
\end{equation}
where $\alpha\in(0,1]$. The variables $S(t)$, $E(t)$, $I(t)$ and $R(t)$ 
represent the number of susceptible, exposed, infective and recovered 
individuals at time $t$, respectively. All the other parameters 
are assumed to be positive constants.

The authors of \cite{Yang} first analyzed the global stability 
of the disease-free equilibrium and discussed the stability 
of the endemic equilibrium but only when $F(S)=S$. However, 
they mentioned that they can not use the estimation \eqref{11} 
in Lemma~\ref{Lemma3.2} to establish global stability in the general 
case and kept this problem as an open question, for future work. 

In addition, we note that function $F\left(S(t)\right) G\left( I(t)\right)$ 
does not cover all the incidence functions existing in the literature, 
e.g., $\dfrac{SI}{1+\alpha_1S+\alpha_2 I+\alpha_3 SI}$, 
$\alpha_1, \alpha_2,\alpha_3\geq 0$ \cite{Lotfi,Mahrouf}, 
where we can not separate the variables $S$ and $I$. Here, we generalize 
the SEIR model \eqref{SEIR} and apply our results to give a rigorous 
proof of the stability for both equilibrium points. 

Let us consider the general fractional-order SEIR model 
\begin{equation}
\label{FSEIR}
\begin{cases}
_{0}D_{t}^{\alpha}S(t) = \Lambda^{\alpha}  - d^{\alpha} S(t)
-  F\left(S(t),I(t)\right),\\[0.2 cm]
_{0}D_{t}^{\alpha}E(t) = F\left(S(t),I(t)\right) 
- (\sigma^{\alpha} + d^{\alpha})E(t), \\[0.2 cm]
_{0}D_{t}^{\alpha}I(t) = \sigma^{\alpha} E(t) 
- (\gamma^{\alpha} + d^{\alpha})I(t),\\[0.2 cm]
_{0}D_{t}^{\alpha}R(t) 
=  \gamma^{\alpha} I(t) - d^{\alpha} R(t),
\end{cases}
\end{equation}
where $_{0}D_{t}^{\alpha}$ denotes any fractional-order 
derivative mentioned in Section~\ref{sec:02}. The general incidence function 
$F:\mathbb{R}^{2}_{+}\to\mathbb{R}_{+}$ is assumed to be continuously 
differentiable and to satisfy the following hypotheses:
\begin{equation}
\label{H}
\tag{$H$} 
\begin{gathered}
F(S,0)= F(0,I)=0 \: \text{ and }\: F(S,I) = I F_{1}(S,I)
\quad \text{ for all } S,I \geq 0,\\
\frac{ \partial F_{1}}{\partial S}(S,I)> 0 \: \text{ and }\: 
\frac{ \partial F_{1}}{\partial I}(S,I)\leq 0 
\quad \text{ for all } S\geq 0\ \text{ and } \ I \geq 0,\\
\frac{ \partial F}{\partial I}(S,I)\geq 0 
\quad \text{ for all } S\geq 0\ \text{ and } \ I \geq 0.
\end{gathered}
\end{equation}
Since $R(t)$ does not appear in the first three equations 
of system \eqref{FSEIR}, without loss of generality
we discuss the following system:
\begin{equation}
\label{FSEIRM}
\begin{cases}
_{0}D_{t}^{\alpha}S(t) 
= \Lambda^{\alpha}  - d^{\alpha} S(t)-  F\left(S(t),I(t)\right),\\[0.2 cm]
_{0}D_{t}^{\alpha}E(t) 
= F\left(S(t),I(t)\right) - m_{1}E(t), \\[0.2 cm]
_{0}D_{t}^{\alpha}I(t) 
= \sigma^{\alpha} E(t) - m_{2}I(t),
\end{cases}
\end{equation}
where $m_{1} = \sigma^{\alpha} + d^{\alpha}$ 
and $m_{2} = \gamma^{\alpha} + d^{\alpha}$.

System \eqref{FSEIRM} has a disease-free equilibrium $P_{f} = (S_{0}, 0, 0)$ 
with $S_{0} =\dfrac{\Lambda^{\alpha}}{d^{\alpha}}$  
and an endemic equilibrium $P^{*} = (S^{*},E^{*}, I^{*})$ 
when $R_0 > 1$, where 
$$ 
R_0= \dfrac{\sigma^{\alpha}}{m_{1} m_{2} }\frac{ \partial F(S_{0},0)}{\partial I}
$$
and
$E^{*}\in\left[ 0, \dfrac{\Lambda^{\alpha}}{d^{\alpha}} \right]$, 
$S^{*}=\dfrac{\Lambda^{\alpha}-m_{1} E^{*}}{d^{\alpha}}$ 
and $I^{*}= \dfrac{\sigma^{\alpha}E^{*}}{m_{2}}$.

Next, we prove the global stability of both equilibriums 
by constructing appropriate Lyapunov functionals 
and using our results of Section~\ref{sec:03}.

\begin{theorem}
The disease-free equilibrium $P_f$ is asymptotically stable when $R_0\leq 1$.
The endemic equilibrium $ P^{*} $ is asymptotically stable whenever $R_0>1$.
\end{theorem}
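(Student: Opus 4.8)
The plan is to prove global stability via the fractional Lyapunov direct method, using the Volterra-type estimates established in Section~\ref{sec:03} (specifically inequality~\eqref{13}, \eqref{17}, and~\eqref{VTE1}, which hold uniformly for the ABC, CF, and Caputo derivatives). The key idea is to construct, for each equilibrium, a Lyapunov functional built from the function $\Psi$ of~\eqref{12} applied to each state variable, so that its fractional derivative can be bounded above by replacing $^{\,}_{0}D_t^\alpha \Psi(u(t))$ with $\bigl(1 - g(u^*)/g(u(t))\bigr)\,{}^{\,}_{0}D_t^\alpha u(t)$ and then substituting the right-hand sides of the system~\eqref{FSEIRM}.

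For the disease-free equilibrium $P_f$, I would take a functional of the form
\begin{equation*}
V_f(t) = \Bigl(S(t) - S_0 - \int_{S_0}^{S(t)} \frac{F(S_0,0+)}{\,\cdots\,}\,\Bigr) + a\,E(t) + b\,I(t),
\end{equation*}
but more cleanly, since $E$ and $I$ vanish at $P_f$, the linear-in-$E,I$ part suffices: set $V_f = \Psi_S(S) + a E + b I$ where $\Psi_S$ uses $g(s)=s$ and the global minimum is at $S_0$, and choose the positive weights $a,b$ so that the $E$-- and $I$--contributions telescope. Applying the estimate to the $S$--term and using $\Lambda^\alpha = d^\alpha S_0$, the derivative $^{\,}_{0}D_t^\alpha V_f$ reduces to a sum of a manifestly nonpositive term $-d^\alpha (S-S_0)^2/S$ plus terms controlled by $F(S,I)$ and the incidence structure; using hypothesis~\eqref{H}, namely $F(S,I)=I F_1(S,I)$ with $\partial F_1/\partial S>0$, together with the definition of $R_0$, one shows the remaining terms combine into something proportional to $(R_0-1)I$ (or a monotonicity comparison of $F_1(S,I)$ against $F_1(S_0,0)$), which is nonpositive exactly when $R_0\le 1$. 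Concluding asymptotic stability then follows from the fractional Lyapunov / LaSalle-type argument (since $^{\,}_{0}D_t^\alpha V_f \le 0$ and $V_f\ge 0$).

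For the endemic equilibrium $P^*$, I would use $V^* = \Psi_S(S) + \Psi_E(E) + c\,\Psi_I(I)$ where each $\Psi$ is centered at the corresponding starred coordinate ($S^*,E^*,I^*$) and the weight $c$ is chosen to cancel cross terms. After applying the three Volterra estimates and substituting the equilibrium relations $\Lambda^\alpha = d^\alpha S^* + F(S^*,I^*)$, $F(S^*,I^*)=m_1 E^*$, and $\sigma^\alpha E^* = m_2 I^*$, the fractional derivative collapses into a $-d^\alpha (S-S^*)^2/S$ term plus a sum of expressions of the classical form $F(S^*,I^*)\bigl[\text{const} - \tfrac{x}{x^*} - \tfrac{x^*}{x} - \cdots\bigr]$, each of which is $\le 0$ by the arithmetic--geometric mean inequality, with an extra factor governed by the sign of $\bigl(F(S,I)/F(S^*,I)-1\bigr)\bigl(1 - F(S^*,I)/F(S,I)\bigr)$-type terms. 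The crucial inputs making these residual terms nonpositive are precisely the monotonicity conditions in~\eqref{H}: $\partial F_1/\partial S>0$ forces the $S$--dependent factor to have the correct sign, while $\partial F_1/\partial I\le 0$ controls the $I$--saturation factor. I would verify each grouped bracket is $\le 0$ by an AM--GM argument.

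The main obstacle I expect is the endemic case: isolating the correct algebraic grouping so that every surviving term is provably nonpositive. Because $F$ does not separate as $F(S)G(I)$ here, the usual trick of factoring the incidence fails, and one must instead exploit the factorization $F(S,I)=I F_1(S,I)$ and the precise monotonicity signs of $F_1$ to replace ratios like $F(S,I)/F(S,I^*)$ with quantities comparable to $I/I^*$. Getting these comparisons right — in particular showing that $\bigl(1-\tfrac{F(S^*,I)}{F(S,I)}\bigr)$ and $\bigl(\tfrac{F(S,I)}{F(S,I^*)}-\tfrac{I}{I^*}\bigr)$ carry the signs needed to close the estimate — is where hypothesis~\eqref{H} is indispensable and where the bulk of the careful work lies; the fractional machinery itself is uniform across the three derivatives and contributes no extra difficulty once the estimates~\eqref{13}, \eqref{17}, \eqref{VTE1} are invoked.
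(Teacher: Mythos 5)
Your overall strategy matches the paper's: build Lyapunov functionals from the function $\Psi$ of \eqref{12}, apply the estimates of Section~\ref{sec:03}, and substitute the equilibrium relations. But there are two genuine gaps. First, for the disease-free equilibrium you commit to $\Psi_S$ with $g(s)=s$, which yields the factor $\bigl(1-\tfrac{S_0}{S}\bigr)$ and leaves the residual term $\tfrac{S_0}{S}F(S,I)-\tfrac{m_1m_2}{\sigma^{\alpha}}I=I\bigl[\tfrac{S_0}{S}F_1(S,I)-\tfrac{m_1m_2}{\sigma^{\alpha}}\bigr]$. Hypothesis \eqref{H} gives $F_1(S,I)\leq F_1(S,0)$ and monotonicity of $F_1$ in $S$, but it does \emph{not} give $\tfrac{S_0}{S}F_1(S,I)\leq F_1(S_0,0)$ in general (try $F_1(S,0)=S^2$ or $\sqrt{S}$), so your claimed reduction to a term proportional to $(R_0-1)I$ fails. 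The whole point of allowing a general weight $g$ in Theorem~\ref{proposition1} is that here you must take $g(s)=F_1(s,0)$, i.e.\ $V_0=\int_{S_0}^{S}\tfrac{F_1(x,0)-F_1(S_0,0)}{F_1(x,0)}dx+E+\tfrac{m_1}{\sigma^{\alpha}}I$; then the prefactor $\bigl(1-\tfrac{F_1(S_0,0)}{F_1(S,0)}\bigr)$ makes the incidence term collapse to $I\,F_1(S_0,0)\tfrac{F_1(S,I)}{F_1(S,0)}\leq I\,F_1(S_0,0)=I\,\tfrac{\partial F}{\partial I}(S_0,0)$, and the $S$-term is nonpositive by monotonicity of $F_1(\cdot,0)$ rather than by a square.

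Second, for the endemic case your plan correctly identifies the crux --- comparing $F(S,I)/F(S,I^{*})$ with $I/I^{*}$ --- but does not actually close it, and AM--GM alone cannot: after grouping with $G(x)=x-1-\ln x$ one nonnegative combination remains that is not of AM--GM type, namely $G\bigl(\tfrac{I}{I^{*}}\bigr)-G\bigl(\tfrac{F(S,I)}{F(S,I^{*})}\bigr)\geq 0$. The paper proves this by fixing $S$, setting $H(I)=G\bigl(\tfrac{F(S,I)}{F(S,I^{*})}\bigr)-G\bigl(\tfrac{I}{I^{*}}\bigr)$, and showing $H(I)\leq H(I^{*})=0$ via the sign of $\tfrac{dH}{dI}$ on either side of $I^{*}$, using $\tfrac{\partial F}{\partial I}=F_1+I\tfrac{\partial F_1}{\partial I}\leq F_1$ (a consequence of $\tfrac{\partial F_1}{\partial I}\leq 0$). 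Also note the paper's $S$-weight at the endemic point is $g(x)=F(x,I^{*})$ (with Volterra weights for $E$ and $I$), not three copies of the same $\Psi$; without that choice the cross terms do not telescope into the bracket $3-\tfrac{F(S^{*},I^{*})}{F(S,I^{*})}+\tfrac{F(S,I)}{F(S,I^{*})}-\tfrac{E^{*}F(S,I)}{EF(S^{*},I^{*})}-\tfrac{I}{I^{*}}-\tfrac{I^{*}E}{IE^{*}}$. As written, your proposal is a plausible roadmap but not yet a proof.
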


\begin{proof}
For the disease-free equilibrium 
we define the following Lyapunov functional:
\begin{align*}
V_{0}(t)=\int^{S(t)}_{S_0}\dfrac{F_{1}(x,0)-F_{1}(S_0,0)}{F_{1}(x,0)}dx
+ E(t) + \dfrac{m_{1}}{\sigma^{\alpha}}I.
\end{align*} 
Applying our results, we estimate the fractional time 
derivative of function $V_{0}$ as 
\begin{align*}
_{0}D_{t}^{\alpha}V_{0}(t)\leq \left( 1 - \dfrac{F_{1}(S_0,0)}{F_{1}(S(t,0)}\right)
\, _{0}D_{t}^{\alpha}S(t)+ \,_{0}D_{t}^{\alpha}E(t) 
+ \dfrac{m_{1}}{\sigma^{\alpha}}\,_{0}D_{t}^{\alpha}I(t).
\end{align*}
Using the fact that $\Lambda^{\alpha}= d^{\alpha}S_0$, we get
\begin{align*}
_{0}D_{t}^{\alpha}V_{0}(t)&\leq \left( 1 -
\dfrac{F_{1}(S_0,0)}{F_{1}(S(t),0)}\right)(d^{\alpha}(S_0-S(t))
+I(t)F_{1}(S_0,0)\dfrac{F_{1}(S(t),I(t))}{F_{1}(S(t),0)} 
- \dfrac{m_{1}m_{2}}{\sigma^{\alpha}}I(t)\\
&\leq \left( 1 - \dfrac{F_{1}(S_0,0)}{F_{1}(S(t),0)}\right)
(d^{\alpha}(S_0-S(t))+I(t)F_{1}(S_0,0) 
- \dfrac{m_{1}m_{2}}{\sigma^{\alpha}}I(t)\\
&= \left( 1 - \dfrac{F_{1}(S_0,0)}{F_{1}(S(t),0)}\right)
(d^{\alpha}(S_0-S(t))+\left( \dfrac{\partial F
(S_0,0)}{\partial I} - \dfrac{m_{1}m_{2}}{\sigma^{\alpha}}\right) I(t)\\
&= \left( 1 - \dfrac{F_{1}(S_0,0)}{F_{1}(S(t),0)}\right)(d^{\alpha}(S_0-S(t))
+\dfrac{m_{1}m_{2}}{\sigma^{\alpha}}\left( R_0 - 1\right) I(t).
\end{align*}
Since $F_{1}$ is an increasing function with respect to $S$, one has
\begin{align*}
1-\dfrac{F_{1}(S_{0},0)}{F_{1}(S,0)}
&\geq 0 \quad \text{for}\quad S\geq S_0,\\
1-\dfrac{F_{1}(S_{0},0)}{f(S,0)}
&< 0 \quad \text{for}\quad S<S_0.
\end{align*}
Then, we get 
\begin{equation*}
\left(1-\dfrac{F_{1}(S_{0},0)}{F_{1}(S,0)}\right)(S_0-S)\leq 0.
\end{equation*}
It follows that $_{0}D_{t}^{\alpha}V_{0}(t)\leq 0$ for 
$R_0\leq 1 $ with $_{0}D_{t}^{\alpha}V_{0}(t)= 0$ 
if $S=S_0$ and $I=0$.  Substituting $(S,I)=(S_0,0)$ in \eqref{FSEIRM} 
shows that $ E\to 0 $ as $ t\to \infty $. We conclude that the disease-free 
equilibrium $ P_f $ is  asymptotically stable when $ R_0\leq 1 $.

Next, we assume that $R_0>1$ and we propose the following Lyapunov 
functional $V_1$ for the endemic equilibrium:
\begin{align*}
V_{1}(t)=\int^{S(t)}_{S^{*}}\dfrac{F(x,I^{*})-F(S^{*},I^{*})}{F(x,I^{*})}dx
+\int^{E(t)}_{E^{*}}\dfrac{x-E^{*}}{x}dx + \dfrac{m_{1}}{\sigma^{\alpha}}\left( 
\int^{I(t)}_{I^{*}}\dfrac{x-I^{*}}{x}dx \right).
\end{align*}
Computing the time fractional derivative of $V_{1}$, we get 
\begin{align*}
_{0}D_{t}^{\alpha}V_{1}(t)\leq &
\left( 1 - \dfrac{F(S^{*},I^{*})}{F(S(t),I^{*})}\right)
\, _{0}D_{t}^{\alpha}S(t)+\left( 1 - \dfrac{E^{*}}{E}\right) 
\,_{0}D_{t}^{\alpha}E(t)\\
&+ \dfrac{m_{1}}{\sigma^{\alpha}}\left( 
1 - \dfrac{I^{*}}{I}\right)\,_{0}D_{t}^{\alpha}I(t).
\end{align*}
Using the fact that $\Lambda^{\alpha}=d^{\alpha}S^{*}+ F(S^{*},I^{*})$, 
$ F(S^{*},I^{*})=m_{1}E^{*}$ and $\sigma^{\alpha}E^{*}= m_{2}I^{*}$, we obtain
\begin{align*}
_{0}D_{t}^{\alpha}V_{1}(t)\leq 
&\left( 1 - \dfrac{F(S^{*},I^{*})}{F(S(t),I^{*})}\right)
d^{\alpha}(S^{*}-S(t))\\
&+F(S^{*},I^{*})\left[ 3- \dfrac{F(S^{*},I^{*})}{F(S,I^{*})}
+ \dfrac{F(S,I)}{F(S,I^{*})}-\dfrac{E^{*}F(S,I)}{EF(S^{*},I^{*})}
-\dfrac{I}{I^{*}}-\dfrac{I^{*}E}{IE^{*}}\right] \\
= &\left( 1 - \dfrac{F_{1}(S^{*},I^{*})}{F_{1}(S(t),I^{*})}\right) 
d^{\alpha}(S^{*}-S(t))-F(S^{*},I^{*})\left[G\left( \dfrac{I}{I^{*}}\right)
-G\left( \dfrac{F(S,I)}{F(S,I^{*})}\right)\right. \\
&\left.   +G\left( \dfrac{F(S^{*},I^{*})}{F(S,I^{*})}\right) 
+ G\left( \dfrac{E^{*}F(S,I)}{EF(S^{*},I^{*})}\right)
+G\left( \dfrac{I^{*}E}{IE^{*}}\right)  \right], 
\end{align*}
where $G(x)=x-1-\ln(x)$. Now, we show that 
$G\left( \dfrac{I}{I^{*}}\right)-G\left( \dfrac{F(S,I)}{F(S,I^{*})}\right)\geq 0$. 
For this, we set 
$$
H(I)=  G\left( \dfrac{F(S,I)}{F(S,I^{*})}\right)-G\left( \dfrac{I}{I^{*}}\right).
$$ 
Computing the derivative of $H$ with respect to $I$, we obtain 
$$
\dfrac{dH}{dI}= \dfrac{F(S,I)-F(S,I^{*})}{F(S,I)F(S,I^{*})}
\dfrac{\partial F (S,I)}{\partial I}-\dfrac{I-I^{*}}{II^{*}}. 
$$ 
We discuss two cases:
\begin{description}
\item[Case 1.] If $ I\geq I^{*}$, then 
$F(S,I)\geq F(S,I^{*})$. Because
$$ 
\dfrac{\partial F (S,I)}{\partial I}
= F_{1} (S,I)+ I \dfrac{\partial F_{1} (S,I)}{\partial I} \leq F_{1} (S,I)
$$
it follows that
\begin{align*}
\dfrac{dH}{dI}
& \leq  \dfrac{F(S,I)-F(S,I^{*})}{F(S,I)F(S,I^{*})}F_{1} (S,I)
-\dfrac{I-I^{*}}{II^{*}}\\
& = \dfrac{1}{F(S,I^{*})}\left[ F_{1}(S,I)-F_{1}(S,I^{*})\right]\leq 0. 
\end{align*}
Hence $ H(I)\leq H(I^{*})=0$.

\item[Case 2.]
If $ I\leq I^{*}$, then $F(S,I)\leq F(S,I^{*})$. Therefore,
\begin{align*}
\dfrac{dH}{dI}
& \geq   \dfrac{F(S,I)-F(S,I^{*})}{F(S,I)F(S,I^{*})}F_{1} (S,I)-\dfrac{I-I^{*}}{II^{*}}\\
& = \dfrac{1}{F(S,I^{*})}\left[ F_{1}(S,I)-F_{1}(S,I^{*})\right]\geq 0. 
\end{align*}
Hence, $ H(I)\leq H(I^{*})=0$.
\end{description}
We conclude that $_{0}D_{t}^{\alpha}V_{1}(t)$ is negative definite. 
Consequently, the endemic equilibrium $ P^{*} $ is asymptotically stable 
whenever $R_0>1$.
\end{proof}


\section{Conclusions}
\label{sec:05}

In this paper, we have developed some estimates for fractional derivatives 
without a singular kernel and applied it to establish 
the stability of fractional-order systems. To illustrate the efficacy 
of the obtained results, we have employed them to solve an open problem
posed by Yang and Xu in \cite{Yang} and prove the stability of a SEIR 
fractional-order system with a general incidence rate. We 
construct suitable Lyapunov functionals and proved the 
globally asymptotically stability of the disease-free and endemic equilibriums 
in terms of the basic reproduction number $R_0$. Our results 
generalize and improve those of \cite{Almeida,Gonzalez-Parra,Sene1}. 


\section*{Acknowledgments}

H.Z. and D.F.M.T. were supported by FCT within project UIDB/04106/2020 (CIDMA).



\bigskip


\end{document}